\documentclass{amsart}
\usepackage[mathscr]{eucal}
\usepackage{color}
\usepackage{xypic}
\usepackage{amsfonts}   
\usepackage{amsmath}
\usepackage{amsthm}
\usepackage{amssymb}
\usepackage{latexsym}
\usepackage[english]{babel}
\usepackage[utf8]{inputenc}
\def\neweq{\setcounter{theorem}{0}}

\let\nc\newcommand
\let\rnc\renewcommand

\nc{\la}{\label}

\def\bg{\begin}
\newtheorem*{conjecture}{Conjecture}

\newtheorem{theorem}{Theorem}[section]
\newtheorem{definition}[theorem]{Definition}
\newtheorem{corollary}[theorem]{Corollary}
\newtheorem{lemma}[theorem]{Lemma}
\newtheorem{proposition}[theorem]{Proposition}

\newtheorem{remark}{Remark}
\newtheorem*{problem}{Problem}
\def\k{\mathsf k}
\def\C{\mathbb C}
\def\Z{\mathbb Z}
\def\A{\mathcal Q}
\def\D{\mathbb D}
\def\F{\mathbb F}
\def\L{\mathcal L}
\def\g{\mathfrak g}

\newcommand{\eq}{equation}
\newcommand{\Mod}{{\tt{Mod}}}
\newcommand{\Mat}{{\tt{Mat}}}
\newcommand{\Alg}{{\tt{Alg}}}
\newcommand{\HH}{{\rm{HH}}}
\newcommand{\Res}{{\rm{Res}}}
\newcommand{\Ann}{{\rm{Ann}}}
\newcommand{\Frac}{{\rm{Frac}}}
\newcommand{\eps}{\varepsilon}
\newcommand{\Tor}{{\rm{Tor}}}
\newcommand{\pr}{{\tt can\,}\,}
\newcommand{\ol} {\mbox{\rm{$1$-length}}}
\newcommand{\len} {\mbox{\rm{length}}}
\newcommand{\Spec}{{\rm{Spec}}}
\newcommand{\Span}{{\rm{span}}}
\newcommand{\Pic}{{\rm{Pic}_{\c}}}
\newcommand{\Aut}{{\rm{Aut}_{\c}}}
\newcommand{\Autk}{{\rm{Aut}}}
\newcommand{\Autg}{{\rm{Aut}_{\Gamma}}}
\newcommand{\id}{{\rm{Id}}}
\newcommand{\Der}{{\rm{Der}}}
\newcommand{\Div}{{\rm{Div}}}
\newcommand{\rk}{{\rm{rk}}}
\newcommand{\Tr}{{\rm{Tr}}}
\newcommand{\tr}{{\rm{tr}}}
\newcommand{\Ker}{{\rm{Ker}}}
\newcommand{\Inn}{{\rm{Inn}}}
\newcommand{\diag}{{\rm{diag}}}
\newcommand{\Coker}{{\rm{Coker}}}
\newcommand{\ev}{{\rm{ev}}}
\newcommand{\im}{{\rm{Im}}}
\newcommand{\balpha}{{\boldsymbol{\alpha}}}
\newcommand{\bmu}{{\boldsymbol{\mu}}}
\newcommand{\bn}{{\boldsymbol{n}}}
\newcommand{\bk}{{\boldsymbol{k}}}
\newcommand{\bU}{{\boldsymbol{U}}}
\newcommand{\bL}{{\boldsymbol{L}}}
\newcommand{\bM}{{\boldsymbol{M}}}
\newcommand{\be}{{\boldsymbol{\rm e}}}
\newcommand{\Ui}{U_{\infty}}
\newcommand{\blambda}{{\boldsymbol{\lambda}}}
\newcommand{\bgg}{{\boldsymbol{g}}}
\newcommand{\Vi}{V_{\infty}}
\newcommand{\vi}{v_{\infty}}
\newcommand{\ei}{e_{\infty}}
\newcommand{\lambdai}{\lambda_{\infty}}
\newcommand{\supp}{{\rm{supp}}}
\newcommand{\into}{\,\,\hookrightarrow\,\,}
\newcommand{\too}{\,\,\longrightarrow\,\,}
\newcommand{\onto}{\,\,\twoheadrightarrow\,\,}

\def\k{\mathsf k}
\def\C{\mathbb C}
\def\Z{\mathbb Z}
\def\A{\mathbb A}
\def\D{\mathbb D}
\def\F{\mathbb F}
\def\L{\mathcal L}
\def\g{\mathfrak g}

\newtheorem{corol}[theorem]{Corollary}

\newenvironment{dedication}
  {
   \itshape             
   \raggedleft          
  }

\AtEndDocument{\bigskip{\footnotesize%
 
  \addvspace{\medskipamount}
  H.\, L. \, Mariano \textsc{Instituto de Matematica e Estatistica, Universidade de S\~{a}o Paulo, \\ S\~{a}o Paulo, Brasil} \par
  \textit{E-mail address}: \texttt{hugomar@ime.usp.br}  \par 
  J.\,Schwarz \textsc{International Center for Mathematics \\ Shenzhen, P. R. China} \par
  \textit{E-mail address}: \texttt{jfschwarz.0791@gmail.com}  \par
   \addvspace{\medskipamount}
  
}}

\begin{document}

\title[Gelfand-Kirillov Conjecture as a first-order formula]{Gelfand-Kirillov Conjecture as a first-order formula}

\author{Hugo Luiz Mariano, Jo\~ao Schwarz}

\begin{abstract}

Let $\Sigma$ be a (reduced) root system. Let $\k$ be an algebraically closed field of zero characteristic, and consider the corresponding semisimple Lie algebra $\mathfrak{g}_{\k, \Sigma}$. Then there is a first-order sentence $\phi_\Sigma$ in the language $\mathcal{L}=(1,0,+,*,-)$ of rings such that, for any algebraically closed field $\k$ of characteristic 0, the validity of the Gelfand-Kirillov Conjecture for $\mathfrak{g}_{\k, \Sigma}$ is equivalent to $ACF_0 \vdash \phi_\Sigma$.



\end{abstract}

\maketitle

\begin{dedication}
    Dedicated to the memory of the brazillian philosopher, logician and mathematician Newton da Costa.
\end{dedication}
\section{Introduction}
\let\thefootnote\relax\footnotetext{MSC 2020 Primary: 03C60; Secundary: 16S85, 16W22, 17B35}
\let\thefootnote\relax\footnotetext{Keywords: First-order characterization, noncommutative birational equivalence, Gelfand-Kirillov Conjecture}

Connections between Algebra and Logic are well known (cf. \cite{Cohn}, \cite{Malcev}). In the specific topic of Algebraic Geometry, this line of inquiry began with the work of Alfred Tarski on the decidability via quantifier elimination in the theory of algebraically closed fields, and have achieved a remarkable development through the years using more sophisticated methods of Model Theory in Algebraic Geometry, such as the work of Ax, Kochen and Ershov on Artin's Conjecture, or the celebrated proof of Mordell-Lang Conjecture by Hrushovski (cf. \cite{Hodges}, \cite{Bouscaren}, \cite{HV}, \cite{MMP}).

Let's fix some conventions. All our rings and fields will be algebras over a base field $\k$.

One of the main problems of algebraic geometry is the birational classification of varieties (\cite{Hartshorne}). In case $\k$ is algebraically closed and we work in the category of affine irreducible varieties the situation is rather simple: given two finitely generated domains $A$ and $B$ and corresponding varieties $X = \operatorname{Spec} \, A, \, Y=\operatorname{Spec} \, B$, they are birationally equivalent if and only if $\operatorname{Frac} \, A = \operatorname{Frac} \, B$. In general, two varieties are birationally equivalent if their function fields are isomorphic fields \cite{Hartshorne}. For a state-of-the-art introduction to the subject, see \cite{Kollar}.

In the 1966 the study of birational geometry of noncommutative objects began. In his adress at the 1966 ICM in Moscow, A. A. Kirillov proposed to classify, up to birational equivalence, the enveloping algebras $U(\mathfrak{g})$ of finite dimensional algebraic Lie algebras $\g$ when $\k$ is algebraically closed of zero characteristic. This means to find canonical division rings such that every skew field $\operatorname{Frac} \, U(\mathfrak{g})$  of the enveloping algebras, which are an Ore domain$^1$ \footnote{$^1$ In general, it is not the case that a noncommutative domain can be embedded in a division ring, as shown by Malcev. Ore domains are an example of when a quotient division ring exists in a particularly nice form, cf. \cite[Chapter 4]{Lam}.}, is isomorphic to one of them.

The idea became mature in the groundbreaking paper \cite{Gelfand}$^2$\footnote{$2$ This is the same paper that introduced the now ubiquitous Gelfand-Kirillov dimension (see \cite[Chapter 8]{MR}), and also the important Gelfad-Kirillov transcendence degree}, where A. A. Kirillov and I. M. Gelfand formulated the celebrated Gelfand-Kirillov Conjecture. Before we formulate it, lets recall some definitions:

\begin{definition} \label{defWeyl}
The rank n Weyl algebra $A_n(\k)$ is the algebra given by generators $x_1, \ldots, x_n$ and $y_1, \ldots, y_n$ and relations $[x_i,x_j]=[y_i,y_j] = 0; [y_i,x_j]= \delta_{ij}$, $i,j=1,\ldots, n$. We denote by $A_{n,s}(\k)$ the algebra $A_n(\k(t_1, \ldots, t_s))$, for $n \geq 1, s \geq 0$. For the sake of notational simplicity, call $A_{0,s}(\k)=\k(t_1, \ldots, t_s)$. In characteristic 0, as is well known, the Weyl algebras are finitely generated simple Noetherian domains \cite{MR}. We denote by $\mathbb{D}_{n,s}(\k), \D_n(\k)$ the skew field of fractions of $A_{n,s}(\k), A_n(\k)$, respectively. These skew fields are called the Weyl fields.
\end{definition}

\begin{conjecture}
\emph{(Gelfand-Kirilov Conjecture)}: Consider the enveloping algebra $U(\mathfrak{g})$, $\mathfrak{g}$ a finite dimensional algebraic Lie algebra over $\k$ algebraically closed of zero characteristic. Its skew field of fractions, $\operatorname{Frac} \, U(\mathfrak{g})$, is of the the form $\mathbb{D}_{n,s}(\k)$, for some $n, s \geq 0$.
\end{conjecture}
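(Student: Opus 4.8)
The plan is to compute $\Frac\,U(\g)$ by realizing $U(\g)$ as an iterated Ore extension governed by the structure of $\g$, and at each stage to localize so as to extract a canonical Weyl pair. The elementary mechanism is the following. If $\mathfrak{a}\subset\mathfrak{b}$ is an ideal of codimension one, say $\mathfrak{b}=\mathfrak{a}\oplus\k x$, then by the PBW theorem $U(\mathfrak{b})=U(\mathfrak{a})[x;\delta]$ is an Ore extension with $\delta=\mathrm{ad}(x)$, so that $[x,u]=\delta(u)$ for $u\in U(\mathfrak{a})$. Writing $D=\Frac\,U(\mathfrak{a})$, if one can find $a\in D$ with $\delta(a)$ a central unit, then after rescaling $[x,a]=1$ and a standard computation shows that $x$ and $a$ generate a copy of the first Weyl field $\D_1(\k)$ which splits off, reducing the problem to a strictly smaller skew field $D_0$; if instead $\delta$ is zero on the relevant centre, $x$ contributes a fresh central indeterminate $t_j$. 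Iterating this over a well-chosen chain of subalgebras is what should produce $\Frac\,U(\g)\cong\D_{n,s}(\k)$.

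First I would settle the case of nilpotent $\g$, which supplies the solid building block. A nilpotent $\g$ admits a chain $0=\g_0\subset\g_1\subset\cdots\subset\g_d=\g$ of ideals with $\dim\g_i/\g_{i-1}=1$ and $[\g,\g_i]\subseteq\g_{i-1}$, so $U(\g)$ is assembled by precisely the codimension-one extensions above. Induction on $d$ together with the reduction lemma yields $\Frac\,U(\g)\cong\D_{n,s}(\k)$ with $2n+s=\dim\g$, the $s$ central parameters accounting for the centre of the fraction field. The same inductive scheme, run along a chain of \emph{algebraic} ideals, should then reach the algebraic solvable case; here one must in addition keep the eigenvalues of the various $\mathrm{ad}$-actions rational over $\k$ so that the localized partners stay in the field, and this is exactly the point at which the hypothesis that $\g$ be algebraic is needed (it is well known that the naive conjecture fails for some non-algebraic solvable Lie algebras).

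For the semisimple algebras $\g_{\k,\Sigma}$ I would use the triangular decomposition $\g=\mathfrak{n}^-\oplus\mathfrak{h}\oplus\mathfrak{n}^+$, where $\dim\mathfrak{n}^{\pm}=|\Sigma^+|$ and $\dim\mathfrak{h}=\ell$, so that the predicted value is $\D_{|\Sigma^+|,\,\ell}(\k)$, consistent with $2|\Sigma^+|+\ell=\dim\g$. The scheme is to begin with $\Frac\,U(\mathfrak{n}^+)$, already a Weyl field by the nilpotent case, adjoin the Cartan part as $\ell$ commuting indeterminates, and then adjoin the negative root vectors one root at a time, pairing each $e_{-\alpha}$ with a partner manufactured from $e_{\alpha}$ and $h_\alpha$ so as to create a new canonical pair via the reduction lemma. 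In type $A$ this pairing can be written out explicitly and the construction closes up, recovering the original Gelfand--Kirillov computation.

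The hard part is precisely this final pairing. Unlike the nilpotent and solvable situations, the derivations arising from the bracket between $\mathfrak{n}^-$ and $\mathfrak{n}^+$ are genuinely non-abelian, and the reduction lemma can fail to linearize them: there may be no element on which the relevant derivation acts as a central unit, in which case no canonical pair is extracted and $\Frac\,U(\g)$ need not be a Weyl field at all. Whether the construction closes up is thus a subtle arithmetic invariant of the root system $\Sigma$ rather than a uniform manipulation, and for types other than $A$ this obstruction can be genuinely nonzero. The absence of a single explicit system of Weyl pairs valid over every algebraically closed field of characteristic zero is exactly what defeats a direct, field-independent argument and motivates recasting the question in first-order terms, as in the sequel.
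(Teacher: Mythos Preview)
The statement you have been asked to ``prove'' is not a theorem but the formulation of an open question; the paper provides no proof of it, and indeed none exists. As the paper itself records immediately after stating the conjecture, it is \emph{false} in general: Alev--Ooms--Van den Bergh produced counterexamples among algebraic Lie algebras, and Premet showed it fails for simple $\g$ of types $B$, $D$, $E$, $F$. So there is no ``paper's own proof'' against which to measure your attempt.

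Your write-up is not a proof either, and to your credit you seem to know this: the closing paragraph explicitly says the reduction mechanism can fail outside type $A$ and that this is what motivates the first-order reformulation. What you have written is a competent informal survey of the known positive cases (nilpotent via iterated Ore extensions, algebraic solvable via Joseph/McConnell/Borho--Gabriel--Rentschler, type $A$ via the original Gelfand--Kirillov construction) together with a heuristic for why the triangular-decomposition approach jams in higher types. That is reasonable exposition, but it is not a proof of the conjecture, and it could not be, since the conjecture is false. If the intent was to supply background for the paper's actual results (Theorems~\ref{main1}--\ref{main}), say so explicitly and do not present it as a proof of the conjecture statement.
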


The Conjecture was shown to be true for $\mathfrak{gl}_n$ and $\mathfrak{sl}_n$, as well for nilpotent Lie algebras, in \cite{Gelfand}. This later fact was generalized for solvable Lie algebras in \cite{McConnell}, \cite{Joseph}, \cite{Borho}. Other cases considered in \cite{Nghiem}  \cite{AOV2}, \cite{Ooms}; and certain modifications of it considered in \cite{Gelfand2} and \cite{Conze}. The Conjecture was very influential in the development of Lie theory (cf. \cite[Problem 3]{Dixmier}) and, recently, a $q$-analogue of the Conjecture is studied in quantum group theory (\cite[I.2.11, II.10.4]{Brown}). It also became a paradigmatic example on the study of skew field of fractions of many Ore domains (cf. \cite{Etingof}, \cite{AD1}, \cite{ADnew}). However, it was eventually shown that the Conjecture is false in general (\cite{AOV}). Regarding simple Lie algebras, it was known to be true in the seminal work of Gelfand and Kirillov for type $A$ simple Lie algebras. This question was revisited by Premet in \cite{Premet}, where using reduction module prime techniques, he showed the Conjecture to be false for types $B, D, E, F$. About the types $C, G$, nothing is known at this moment.

The purpose of this paper is to show that, surprisingly, given a (reduced) root system $\Sigma$ (cf. \cite[11.1]{Dixmier})  and \emph{any} algebraically closed field $\k$ with zero characteristic, the validity of the Gelfand-Kirillov Conjecture  for the finite dimensional Lie algebra $\mathfrak{g}_{\k, \Sigma}$ --- that is, the only semisimple Lie algebra over $\k$ whose associated root system is $\Sigma$ --- is  equivalent to the provability of a certain first-order sentence in the language of rings $\mathcal{L}(0,1,+,*,-)$ in the theory of algebraically closed fields of zero characteristic --- $ACF_0$ (cf. \cite{Hodges}).

Being more precise, we have:

\begin{theorem}\label{main1}
Given a root system $\Sigma$, there is a first-order sentence $\phi_\Sigma$ in the language of rings $\mathcal{L}(1,0,+,*,-)$ such that the below are equivalent:
\begin{enumerate}
\item
For some algebraically closed field $\k$ of zero characteristic, the Gelfand-Kirillov Conjecture holds for $\mathfrak{g}_{\k, \Sigma}$ .
\item
For all algebraically closed of zero characteristic $\k$ , the Gelfand-Kirillov Conjecture holds for  $\mathfrak{g}_{\k, \Sigma}$ .
\item
$ACF_0 \vdash \phi_\Sigma$.
\end{enumerate}

Moreover, $\phi_\Sigma$ is {\bf naturally} constructed as a existential closure of boolean combinations of atomic formulas in the language$^3$. \footnote{$^3$Obviously, since $ACF_0$ admits quantifier elimination, every sentence in the language of rings is equivalent to a boolean combination of atomic sentences.}
\end{theorem}

This theorem is surprising because there is no a priori reason for the Gelfand-Kirillov Conjecture to be expressed in a first-order formula. In what follows, $\A$ will denote the field of algebraic numbers.

To have a better understanding of what is happening, lets recall some ring theory. Let $\mathcal{R}=\mathcal{L}(1,0,+,*,-)$ be the first-order language of rings, and $\mathcal{R}^2$ the second-order language of rings.

Given a ring $E$, we want to define its Jacobson radical, denoted here by $J_E$, as a predicate: $J_E(r)$, where given $r \in E$, $J_E(r)$ means that $r \in J_E$. Our discussion of the Jacobson radical is classical; see, e. g.,  \cite{Lam}.

A first definition of $J_E$ might be: the intersection of the anihilators of all irreducible left $E$-modules. If this is to be the definition, we must work in ZFC: there will be a first-order formula in the language of ZFC, $\phi(x)$, containing $x$ as the only free variable, such that $J_E(r)$ if and only if

$ZFC \vdash \phi(r), \, r \in E$

Another description of the Jacobson radical $J_E$ is: the intersection of all left maximal ideals of $E$. In this case there is a formula $\phi_2(x) \in \mathcal{R}^2$, with only the variable $x$ free, such that $J_E(r)$ if and only if $E \models  \phi_2(r), \, r \in E$, in full second-order semantics.

Finally, as an equivalent characterization, we have: $r \in J_E$ if and only if for every $s \in E$, $1-sr$ is left-invertible in $E$. This is clearly expressible in first-order formula of the language $\mathcal{R}$.

Let $\Sigma$ be a root system and $\k$ an algebraically closed field of zero characteristic. We want to define the predicate $\mathcal{GK}(\k, \Sigma)$, that means that the Gelfand-Kirillov Conjecture is true for $\g_{\k,\Sigma}$. The initial definition is in ZFC.

If, for each $\k$, we had a formula $\theta_{\k, \Sigma}$ in the language $\mathcal{R}$ such that $\mathcal{GK}(\k, \Sigma)$ if and only if $\k \models \theta_{\k, \Sigma}$, we would already have a remarkable fact.

However, there is a  first-order formula $\theta_\Sigma$ in the language $\mathcal{R}$ such that $\mathcal{GK}(\k, \Sigma)$, for arbitrary $\k$ if and only if $\A \models \theta_\Sigma$. As $AFC_0$ is a complete theory, this holds if and only if $AFC_0 \vdash \theta_\Sigma$.

We remark also that the expression of a statement as a first-order sentence in $ACF_0$ is a very important question. One of the main applications of this idea is Lefchetz's Principle from algebraic geometry, since $ACF_0$ is a complete theory (\cite{Hodges}), in order to prove a statement for a variety over an algebraically closed field of zero characteristic, it suffices to show it for ${\mathsf k} = \mathbb{C}$, where transcendental methods are appliable. The Gelfand-Kirillov Conjecture is obviously in the realm of noncommutative algebraic geometry, but as a illustration of our result, the validity of the Gelfand-Kirillov Conjecture for simple Lie algebras of type C or G, where it is still open, over any algebraically closed field of zero characteristic, is equivalent to its validity over $\mathbb{C}$, where we can use complex Lie group techniques and other analytic and geometric theories. We remark, however, that since of course not every geometric aspect can be expressed in first-order logic, there is a need to be careful  (cf. \cite{Seidenberg})$^4$ \footnote{$^4$ However, the general feeling that Lefchetz's principle holds for any reasonably simple statement in algebraic geometry has been formalized in the work \cite{Eklof}.}

\section*{Acknowledgments}

J. S. was supported in part by FAPESP grant 2018/18146-5. The second author is grateful to M. Soares and S. Hoffmann Panucci.

\section{Proof of the Main Theorem}

Our method is very simple. Premet, in \cite[Theorem 2]{Premet} proved the following: if the Gelfand-Kirillov Conjecture holds for a semisimple Lie algebra $\g_{\k, \Sigma}$ in characteristic 0, then its modular version also holds in characterstic p, for p big enough. From a careful analysis of his proof (see Theorem \ref{mainpart1}) we obtain a first-order formula in the language of rings $\mathcal{R}$, denoted by $\phi_{\k, \Sigma}$, such that the Gelfand-Kirillov Conjecture holds for $\g_{\k, \Sigma}$ if and only if $\k \models \phi_{\k, \Sigma}$. Then we show that the formula $\phi_\Sigma$ in Theorem \ref{main1} can be choosen as $\phi_{\A, \Sigma}$, where $\A$ is the field of algebraic numbers.

Let $\operatorname{Tdeg}$ denote the Gelfand-Kirillov transcendence degree of a $\k$ algebra (cf. \cite{Gelfand}, \cite{Zhang}); $trdeg$ will denote the usual transcendence degree of commutative fields. Over a field $\k$ of characteristic 0, $\operatorname{Tdeg} \, \D_{n,l}(\k) = 2n + l$, as computed in \cite[Thm. 2]{Gelfand}. 

\begin{remark}
    One should not confuse the Gelfand-Kirillov transcendence degree with the more used common Gelfand-Kirillov \emph{dimension} (see, e.g., \cite[Chapter 8]{MR}). They don`t coincide: it was shown by L. Makar-Limanov \cite{Makar} that the Weyl fields contain as subalgebras noncommutative free algebras, and hence their Gelfand-Kirillov dimension is infinite, while we just saw that their Gelfand-Kirillov transcendence degree is finite.
\end{remark}

We need to recall some facts of finite dimensional semisimple Lie algebras and their enveloping algebras. Let us fix a root system $\Sigma$, with a fixed basis of simple roots $\Delta$, $n$ in total, and $N$ positive roots in total; the Weyl group will be denoted by $W$. Consider the Lie ring $\g_{\Z, \Sigma}$ obtained by Chevalley basis in the corresponding complex Lie algebra (cf. \cite[25.2]{Humphreys}.)

For every algebraically closed field of characteristic 0 $\k$, we have $\mathfrak{g}_{\k, \Sigma} = \g_{\Z, \Sigma} \otimes_\Z \k$, and similarly $U(\g_{\k,\Sigma}) = U(\g_{\Z, \Sigma}) \otimes_\Z \k$ (cf. \cite[25.5]{Humphreys}, \cite[Chap 1, 2.9]{Bourbaki}). It is also clear that if $\k \subset \k'$, we have $U(\g_{\Z,\Sigma}) \subset U(\g_{\k, \Sigma}) \subset U(\g_{\k', \Sigma})$.

\vspace{0.2cm}

I will denote by $\mathcal{K}=\{ U(\g_{\k, \Sigma})| char \, \k =0, \bar{\k}=\k \}$, $U_\Z= U(\g_{\Z, \Sigma})$. Strictly speaking, $\mathcal{K}$ is a proper class and not a set, but we don't need to worry about this small foundational issue.

The following proposition is well known (e.g., \cite[Chapter 5]{Jacobson}.

\begin{proposition}\label{2}
Every algebra in $\mathcal{K}$ and also $U_\Z$ are Noetherian (hence Ore) domains.
\end{proposition}
    


\begin{proposition}\label{4-5-6}
    Given the algebra $U(\g_{\k,\Sigma})$, we have:

    \begin{enumerate}

\item $Z(\operatorname{Frac} \, U(\g_{\k, \Sigma})) \simeq \operatorname{Frac} \, Z(U(\g_{\k, \Sigma}))$

\item $\operatorname{Tdeg} \, \operatorname{Frac} \, U(\g_{\k, \Sigma}) = \operatorname{dim}_\k \, \g_{\k, \Sigma} $

\item $Z(U(\g_{\k,\Sigma}))$ is a polynomial algebra $\k[x_1, \ldots, x_K]$, where $x_1, \ldots, x_K \in U_\Z$.
        
    \end{enumerate}
\end{proposition}
\begin{proof}
    Item 1 is \cite[4.3.2]{Dixmier}. Item 2 is \cite[Theorem 1.1]{Zhang}. Item 3 is \cite[7.4.6]{Dixmier}.
\end{proof}

The following lemma is obvious, but crucial in what follows.

\begin{lemma}\label{lemma0}
Let $\k$ be an algebraically closed field of zero characteristic, and $U(\g_{\k, \Sigma}) \in \mathcal{K}$. If $\operatorname{Frac} \, U(\g_{\k, \Sigma}) \cong \mathbb{D}_{n,l}(\k)$, then $\operatorname{Frac} \, Z(U(\g_{\k, \Sigma})) \cong \k(x_1,\ldots,x_l)$.
\begin{proof}
Obvious, since $Z( \mathbb{D}_{n,l}(\k)) = \k(x_1,\ldots, x_l)$, cf. \cite[Thm. 2]{Gelfand}, and we have the isomorphism in Proposition \ref{4-5-6}(1)
\end{proof}
\end{lemma}

The following Proposition is a broad generalization of \cite[Lemma 1.2.3]{Bois}, of independent interest.

\begin{proposition}\label{bois}
Let $A$ be an Ore domain over $\k$ of arbitrary characteristic, such that $\operatorname{Tdeg} \, A = 2n+l$.  Suppose we have elements $x_1, \ldots, x_n, y_1, \ldots, y_n, z_1, \ldots, z_l$ of $\operatorname{Frac} \, A$ such that $[x_i, y_j] = \delta_{i,j}$ and all other commutators between these elements vanish. Let $B$ be the subalgebra of $\operatorname{Frac} \, A$ generated by these elements. If $\operatorname{Frac} \, B = \operatorname{Frac} \, A$ then $B \cong A_{n,l}(\k)$ and $\operatorname{Frac} \, A \cong  \D_{n,l}(\k)$.
\end{proposition}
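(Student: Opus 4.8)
The plan is to realize $B$ as a homomorphic image of the polynomial Weyl algebra $A_n(\k)[Z_1,\dots,Z_l]$ and then to show, via a Gelfand--Kirillov dimension count, that the resulting surjection is an isomorphism. Concretely, I would set $P:=A_n(\k)\otimes_\k\k[Z_1,\dots,Z_l]=A_n(\k)[Z_1,\dots,Z_l]$, which is a Noetherian --- hence Ore --- domain in every characteristic (in characteristic $p$ it is even module-finite over the polynomial ring $\k[x_i^p,y_i^p,Z_j]$); localizing $P$ at the central set $\k[Z_1,\dots,Z_l]\setminus\{0\}$ yields $A_n(\k(Z_1,\dots,Z_l))=A_{n,l}(\k)$, so $Frac\, P\cong\D_{n,l}(\k)$. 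Because the given elements $x_i,y_i,z_j\in Frac\, A$ satisfy exactly the defining relations of $P$, the assignments $X_i\mapsto x_i,\ Y_i\mapsto y_i,\ Z_j\mapsto z_j$ determine a $\k$-algebra homomorphism $\pi\colon P\to Frac\, A$ with image $B$; thus $\pi\colon P\twoheadrightarrow B$. Since $B\subseteq Frac\, A$ it is a domain, and it is an Ore domain because $Frac\, B=Frac\, A$ is assumed to make sense, so $\mathfrak p:=\Ker\pi$ is a completely prime two-sided ideal and $B\cong P/\mathfrak p$. Everything reduces to showing $\mathfrak p=0$.

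The technical core is the estimate: \emph{every nonzero two-sided ideal $\mathfrak p$ of $P$ has $\operatorname{GKdim}(P/\mathfrak p)\le 2n+l-1$}. In characteristic $0$ I would use that $A_n(\k)$ is simple with center $\k$, so that the two-sided ideals of $P=A_n(\k)\otimes_\k\k[Z_1,\dots,Z_l]$ are precisely the ideals $A_n(\k)\otimes_\k\mathfrak a$ with $\mathfrak a\trianglelefteq\k[Z_1,\dots,Z_l]$; then $P/\mathfrak p\cong A_n(\k[Z_1,\dots,Z_l]/\mathfrak a)$ with $\mathfrak a\ne 0$, and $\operatorname{GKdim}A_n(\k[Z]/\mathfrak a)=2n+\dim(\k[Z]/\mathfrak a)\le 2n+l-1$. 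In characteristic $p>0$ I would instead use that $A_n(\k)$ is an Azumaya algebra of rank $p^{2n}$ over its center $\k[x_1^p,\dots,x_n^p,y_1^p,\dots,y_n^p]$, so $P$ is Azumaya over the polynomial ring $C:=\k[x_i^p,y_i^p,Z_j]$ in $2n+l$ variables; its two-sided ideals are then extended from $C$, so $P/\mathfrak p$ is a finite module over $C/\mathfrak c$ for a nonzero ideal $\mathfrak c$, giving $\operatorname{GKdim}(P/\mathfrak p)=\dim(C/\mathfrak c)\le 2n+l-1$.

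To finish, I would observe that $B$ is a finitely generated Ore domain with $Frac\, B=Frac\, A$, so by Zhang's study of the Gelfand--Kirillov transcendence degree \cite{Zhang} one has $Tdeg\, A=Tdeg\, Frac\, A=Tdeg\, B$ (the transcendence degree is unchanged under passage to the skew field of fractions of an Ore domain) and $Tdeg\, B\le\operatorname{GKdim}B$; combined with the hypothesis $Tdeg\, A=2n+l$ this gives $\operatorname{GKdim}B\ge 2n+l$. If $\mathfrak p$ were nonzero, the estimate of the previous paragraph would force $\operatorname{GKdim}B=\operatorname{GKdim}(P/\mathfrak p)\le 2n+l-1$, a contradiction; hence $\mathfrak p=0$, $\pi$ is an isomorphism, $B\cong P\cong A_{n,l}(\k)$, and $Frac\, A=Frac\, B\cong Frac\, P\cong\D_{n,l}(\k)$.

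I expect the main obstacle to be the positive-characteristic half of the dimension estimate: there $A_n(\k)$ is no longer simple, so one cannot identify the two-sided ideals of $P$ with those of a central polynomial subring directly, and the whole argument must instead be routed through the Azumaya structure over the ring of $p$-th powers in order to control $\operatorname{GKdim}$ of the quotients. A related point needing care is that $Tdeg$ and $\operatorname{GKdim}$ interact with the formation of skew fields of fractions of Ore domains exactly as used above, uniformly in the characteristic --- which, together with the value $Tdeg\,\D_{n,l}(\k)=2n+l$, is precisely what \cite{Zhang} provides.
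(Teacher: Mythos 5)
Your proposal is correct and follows essentially the same route as the paper: realize $B$ as a quotient of a Weyl-type algebra via its presentation, then use Zhang's inequality $GK \geq Tdeg$ together with $Tdeg\, Frac\, B = Tdeg\, Frac\, A = 2n+l$ to force the kernel of the surjection to vanish. The only divergence is the justification of the dimension drop for proper quotients, which you prove by hand (simplicity of $A_n(\k)$ in characteristic $0$, the Azumaya structure over the ring of $p$-th powers in characteristic $p$), whereas the paper simply invokes \cite[Corollary 8.3.6]{MR} --- the GK dimension of a proper quotient of a Noetherian domain of finite GK dimension drops by at least one --- which applies uniformly in all characteristics, so the positive-characteristic obstacle you anticipate does not actually arise.
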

\begin{proof}
By the presentation of the Weyl algebra by generators and relations, we have a surjective homomorphism  $\theta: A_{n,l}(\k) \rightarrow B$ with kernel $I$. Denoting the Gelfand-Kirillov dimension by $\operatorname{GKdim}$, we have by \cite[Theorem 1.1, Proposition 3.1]{Zhang}, $2n+l=\operatorname{GKdim} \, A_{n,l}(\k) \geq \operatorname{GKdim}  \, B \geq \operatorname{Tdeg}  \, B \geq \operatorname{Tdeg} \, \operatorname{Frac} \, B = 2n+l$. Suppose $I \neq \{ 0 \}$. Then implies $\operatorname{GKdim} \, B < \operatorname{GKdim}  \, A_{n,l}(\k)$ (\cite[Corollary 8.3.6]{MR}), which is absurd, $\theta$ is an isomorphism. Hence the claim follows.
\end{proof}

\begin{proposition}\label{charac}
Let $U(\g_{\k, \Sigma})$ be an algebra in $\mathcal{K}$,  $x_1, \ldots, x_n$ be a basis  $\g_{\k, \Sigma}$ contained in $\g_{\Z, \Sigma}$. Then $\operatorname{Frac} \, U(\g_{\k, \Sigma}) \cong \D_{m,l}(\k)$ if and only if we have elements $w_1, \ldots, w_m, w_{m+1}, \ldots,  w_{2m}$  in $\operatorname{Frac} \, U(\g_{\k, \Sigma})$, such that $[w_i,w_j]=[w_{m+i},w_{m+j}]=0, [w_i,w_{m+j}] = \delta_{ij}, i,j=1,\ldots, m$, and polynomials $p_i, q_i$ in $w$'s and coefficients in $Z(U(\g_{\k, \Sigma}))$ such that $q_i x_i = p_i$, $i=1, \ldots, n$.
\begin{proof}
Necessity is clear; the only point worth a remark is about the polynomials $p_i, q_i$. They exist because the algebra generated inside $\operatorname{Frac} \, U(\g_{\k, \Sigma})$ by $w_1, \ldots, w_m,$ $w_{m+1}, \ldots,  w_{2m}$ has as basis monomials on these variables with non-negative integer exponents. Sufficiency follows from Proposition \ref{bois}.
\end{proof}
\end{proposition}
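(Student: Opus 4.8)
This is a biconditional, and in both directions the crux is to read the relations $q_i x_i = p_i$ for what they are: they say exactly that every algebra generator $x_i$ of $A_\k$ is a left Ore fraction of two elements of the subalgebra of $Frac\, A_\k$ generated by the $w$'s together with $Z(A_\k)$. So the plan is: for necessity, transport the canonical Weyl generators along an isomorphism $Frac\, A_\k \cong \D_{m,l}(\k)$ and then produce the polynomials $p_i,q_i$ by clearing denominators; for sufficiency, bundle the $w$'s and a polynomial basis of $Z(A_\k)$ into one subalgebra $B$, check $Frac\, B = Frac\, A_\k$, and quote Proposition \ref{bois}. Here, as in the definition preceding Proposition \ref{bois}, $l = tdeg\, Frac\, Z(A_\k)$ and $2m+l = Tdeg\, Frac\, A_\k$, which by property (5) of Definition \ref{main-def} equals $\dim_\k A_1^\k = n$.

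\emph{Necessity.} Fix an isomorphism $Frac\, A_\k \cong \D_{m,l}(\k) = Frac\, A_m(\k(t_1,\dots,t_l))$ and let $w_1,\dots,w_m,w_{m+1},\dots,w_{2m}$ be the elements of $Frac\, A_\k$ corresponding to the canonical generators $X_1,\dots,X_m,Y_1,\dots,Y_m$ of $A_m(\k(t_1,\dots,t_l))$; they satisfy the asserted Heisenberg relations. Each $x_i$ lies in $A_\k\subseteq Frac\, A_\k$, hence in $Frac\, A_m(\k(t_1,\dots,t_l))$; the Weyl algebra over a field is Noetherian, hence Ore, so we may write $x_i = \beta_i^{-1}\alpha_i$ with $\beta_i\neq 0$ and $\alpha_i,\beta_i$ polynomials in the $w$'s whose coefficients lie in $\k(t_1,\dots,t_l)$. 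Now, by property (4) of Definition \ref{main-def} and the identity $Z(\D_{m,l}(\k)) = \k(t_1,\dots,t_l)$, the field $\k(t_1,\dots,t_l)$ equals $Frac\, Z(A_\k)$, and every element of $Z(A_\k)$ is central in all of $Frac\, A_\k$. Pick $0\neq c_i\in Z(A_\k)$ clearing the finitely many denominators among the coefficients of $\alpha_i$ and $\beta_i$; since $c_i$ is central, $x_i = \beta_i^{-1}\alpha_i = (c_i\beta_i)^{-1}(c_i\alpha_i)$, so $q_i := c_i\beta_i$ and $p_i := c_i\alpha_i$ are polynomials in the $w$'s with coefficients in $Z(A_\k)$, $q_i\neq 0$, and $q_i x_i = p_i$.

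\emph{Sufficiency.} By property (6) of Definition \ref{main-def}, $Z(A_\k) = \k[z_1,\dots,z_l]$ with $z_1,\dots,z_l\in A_\k$; by property (4) these $z_j$ are central in $Frac\, A_\k$, so the whole set $w_1,\dots,w_{2m},z_1,\dots,z_l$ has all commutators zero except $[w_i,w_{m+j}]=\delta_{ij}$. Let $B\subseteq Frac\, A_\k$ be the subalgebra these $2m+l$ elements generate. As a homomorphic image of the Weyl algebra $A_m(\k[z_1,\dots,z_l])$ it is a Noetherian domain, hence an Ore domain, so $Frac\, B$ makes sense and $B\subseteq Frac\, B\subseteq Frac\, A_\k$. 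By property (3) the filtration has $A_0=\k$, $A_i = A_1^i$, and exhausts $A_\k$, so $A_\k$ is generated as a $\k$-algebra by $x_1,\dots,x_n$; and each $x_i = q_i^{-1}p_i$ with $p_i,q_i\in B$ (polynomials in the $w$'s with coefficients in $\k[z_1,\dots,z_l] = Z(A_\k)\subseteq B$), hence $x_i\in Frac\, B$. Therefore $A_\k\subseteq Frac\, B$, which forces $Frac\, A_\k = Frac\, B$. Since $Tdeg\, Frac\, A_\k = 2m+l$, Proposition \ref{bois} applies with the $w_i$ ($i\le m$) as the $x$'s, the $w_{m+j}$ as the $y$'s, and the $z_k$ as the $z$'s, and yields $B\cong A_{m,l}(\k)$ and $Frac\, A_\k\cong \D_{m,l}(\k)$.

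\emph{Where the work is.} Neither implication is deep, but the step deserving care is the denominator-clearing in necessity: it is essential both that the coefficient denominators can be absorbed into $Z(A_\k)$ itself (not merely into $Frac\, Z(A_\k)$) and that multiplying by $c_i$ leaves the noncommutative relations intact, and both rely on $Z(A_\k)$ being \emph{central} in $Frac\, A_\k$, that is, on property (4) ($Z(Frac\, A_\k) = Frac\, Z(A_\k)$). On the sufficiency side the delicate inputs are precisely the two that feed Proposition \ref{bois}: the equality $Tdeg\, Frac\, A_\k = 2m+l$, which comes from property (5), and the fact that $A_\k$ is finitely generated by the $x_i$, which uses the multiplicative structure $A_i = A_1^i$ of the filtration in property (3).
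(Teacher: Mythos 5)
Your proof is correct and follows essentially the same route as the paper: necessity by transporting the canonical Weyl generators and clearing denominators (the paper's remark about the monomial basis of the subalgebra generated by the $w$'s is exactly your Ore/denominator-clearing step, which you additionally justify via property (4) to land the coefficients in $Z(A_\k)$ rather than $Frac\,Z(A_\k)$), and sufficiency by reducing to Proposition \ref{bois}. You merely make explicit the details the paper leaves implicit, such as $Frac\,B = Frac\,A_\k$ via the finite generation of $A_\k$ coming from $A_i = A_1^i$.
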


The next result is a kind of ``Nullstellensatz''; its proof follows closely \cite[Theorem 2.1]{Premet}.

\begin{theorem}\label{mainpart1}
Let $U(\g_{\k,\Sigma}) \in \mathcal{K}$. Then, for $N>>0$, there is a locally closed subvariety $X^\k$ of $\k^N$, defined by polynomials with integer coefficients, such that the Gelfand-Kirillov Conjecture holds for $U(\g_{\k,\Sigma})$ if and only if $X^\k \neq \varnothing$.
\begin{proof}
Suppose $\operatorname{Frac} \, U(\g_{\k,\Sigma}) \cong \D_{m,l}(\k)$. By Proposition \ref{charac} we can find elements $x_1, \ldots, x_n  \in \g_{\Z, \Sigma}$ which form a basis of $\g_{\k, \Sigma}$, $w_1, \ldots, w_m, w_{m+1}, \ldots,  w_{2m}$ in $Frac \, U(\g_{\k,\Sigma})$, such that: 
\[ (\dagger) \, [w_i,w_j]=[w_{m+i},w_{m+j}]=0, [w_i,w_{m+j}] = \delta_{ij}, i,j=1,\ldots, m, \] and polynomials $p_i, q_i$ in $w$'s and coefficients in $Z(A_\k)$ such that \[ (\ddagger) \, q_i x_i = p_i, i=1, \ldots, n. \]

 Call $Z(U(\g_{\k,\Sigma}))= \k [\phi_1, \ldots \phi_l]$, $\phi_i \in U(\g_{\Z, \Sigma})$.

 Recall the PBW-filtration of $U(\g_{\k,\Sigma})$ $\mathcal{F}_\k=\{ F_{i,\k} \}_{i \geq 0}$ from Proposition \ref{2}.

Using the fact that $U(\g_{\k,\Sigma})$ is an Ore domain, we can find $a_i, b_j \in F_{d(i), \k}, i=1, \ldots, 2m$, for a certain $d(i) \in \mathbb{N}$, such that $w_i=b_i^{-1}a_i$; and similarly $a_{i,j}, b_{i,j} \in F_{d(i,j), \k}, i,j=1,\ldots,n$, such that

\begin{align} b_{i,j} a_i = a_{i,j} b_j , i,j=1,\ldots,2m.
\end{align}

We can then introduce $c_{i,j}, d_{i,j} \in F_{d(i,j), \k}$ such that

\begin{align}
c_{i,j} b_{i,j} b_i = d_{i,j} b_{j,i} b_j, i,j=1,\ldots, 2m; 
\end{align}

and hence, because of $(\dagger)$,

\begin{align}
c_{i,j} a_{i,j} a_j = d_{i,j} a_{j,i} a_i, i,j=1, \ldots, m \text{ or } i,j=m,\ldots,2m;
\end{align}

\begin{align}
  c_{i,m+j} a_{i,m+j} a_{m+j} = \delta_{i,j}  c_{i,m+j} b_{i,m+j} b_{i}  + d_{i,m+j} a_{m+j, i} a_{i}, i,j=1,\ldots,m.
\end{align}

When $\mathsf{m} \geq 3$ and we have an $\mathsf{m}$-uple $\mathfrak{i}=(i(1),\ldots,i(\mathsf{m}))$ with $1 \leq i(1) \leq \ldots \leq i(\mathsf{m}) \leq 2m$,
we can inductively find elements $a_{i(1)\ldots i(k)}, b_{i(1) \ldots i(k)}$ in $F_{d(\mathbf{i}), \k}^\k$, where $3 \leq k \leq \mathsf{m}$ and such that

\begin{align}
 b_{i(1) \ldots i(k)}a_{i(1)\ldots i(k-1)} a_{i(k-1)}=a_{i(1)\ldots i(k)}b_{i(k)}.
\end{align}

Write $b_\mathfrak{i}= \prod_{k=1}^{\mathsf{m}} b_{i(1) \ldots i(\mathsf{m}-k+1)}$ and $a_\mathfrak{i}=a_{i(1)\ldots i(\mathsf{m})}a_{i(\mathsf{m})}$.  We can consider the tuples $\mathfrak{i}$ as above such that, calling $M = max \{ deg \, p_i, deg \, q_i| i=1,\ldots,n \}$, $\sum_{\ell=1}^{\mathsf{m}} i(\ell) \leq M$. Call them $\{ \mathfrak{i}(1), \ldots, \mathfrak{i}(r) \}$. We have $p_k= \sum_{j=1}^r \lambda_{j,k} b_{\mathfrak{i}(j)}^{-1} a_{\mathfrak{i}(j)}$ and $q_k=\sum_{j=1}^r \mu_{j,k} b_{\mathfrak{i}(j)}^{-1} a_{\mathfrak{i}(j)}$.

We can write \[ \lambda_{j,k} = \sum \lambda_{j,k}(n_1, \ldots, n_l) \phi_1^{n_1} \ldots \phi_l^{n_l}, \]
\[ \mu_{j,k} = \sum \mu_{j,k}(n_1, \ldots, n_l) \phi_1^{n_1} \ldots \phi_l^{n_l}, \]

where summation is over finitely many $l$-tuples $(n_1, \ldots, n_l) \in \mathbb{N}^l$ and $ \lambda_{j,k}(n_1, \ldots, n_l),$ $ \mu_{j,k}(n_1, \ldots, n_l) \in \k$.

There exists $0 \neq  e_{\mathfrak{i}(j);k}, f_{\mathfrak{i}(j);k} \in F_{d(\mathfrak{i}(j);k), \k}$ with

\begin{align}
 a_{\mathfrak{i}(j)} x_k  f_{\mathfrak{i}(j);k} = b_{\mathfrak{i}(j)} e_{\mathfrak{i}(j);k}, j=1, \ldots, r, k=1, \ldots n,
\end{align}

and $p_k = q_k x_k$ implies

\[  \, \sum_{j=1}^r \lambda_{j,k}  b_{\mathfrak{i}(j)}^{-1} a_{\mathfrak{i}(j)} = \sum_{j=1}^r \mu_{j,k} e_{\mathfrak{i}(j);k} f_{\mathfrak{i}(j);k}^{-1}, k=1, \ldots, n. \]

Let us now introduce $a_{\mathfrak{i}(j)}(0)=a_{\mathfrak{i}(j)}, b_{\mathfrak{i}(j)}(0)=b_{\mathfrak{i}(j)}$; $ e_{\mathfrak{i}(j);k}(0)= e_{\mathfrak{i}(j);k};  f_{\mathfrak{i}(j);k}(0)= f_{\mathfrak{i}(j);k}$; and inductively, for each $0<s<j \leq r$, $a_{\mathfrak{i}(j)}(s), b_{\mathfrak{i}(j)}(s)$, $ e_{\mathfrak{i}(j);k}(s), f_{\mathfrak{i}(j);k}(s)$, all in $F_{d(\mathfrak{i}(j),k,s), \k}$ such that the following holds:

\begin{align} b_{\mathfrak{i}(j)}(s)b_{\mathfrak{i}(s)}(s-1)=a_{\mathfrak{i}(j)}(s)b_{\mathfrak{i}(j)}(s-1),
\end{align}

\begin{align} f_{\mathfrak{i}(j);k}(s-1) e_{\mathfrak{i}(j);k}(s) = f_{\mathfrak{i}(s);k}(s-1) f_{\mathfrak{i}(j);k}(s)
\end{align}

Finally, the following algebraic relation holds:

\begin{align}
  (\sum_{j=1}^r \lambda{j,k} \prod_{\ell=1}^{r-j} b_{\mathfrak{i}(r-\ell+1)}(r-\ell) \prod_{\ell=1}^j a_{\mathfrak{i}(j-\ell+1)}(j-\ell)) \prod_{\ell=1}^r f_{\mathfrak{i}(\ell);k}(\ell-1)
\end{align}

\[ = (\prod_{\ell=1}^r  b_{\mathfrak{i}(r-\ell+1)}(r-\ell)  \times \]
\[ (\sum_{j=1}^r \mu_{j,k} \prod_{\ell=1}^j  e_{\mathfrak{i}(\ell);k}(\ell-1) \prod_{\ell=j+1}^r  f_{\mathfrak{i}(\ell);k}(\ell-1)). \]

It follows from the above that we have elements \[ a_i, b_i, a_{i,j}, b_{i,j}, c_{i,j}, d_{i,j}, a_{i(1)\ldots i(s)}, b_{i(i) \ldots i(s)}, a_{\mathbf{i}(j)}(\ell),  b_{\mathbf{i}(j)}(\ell), e_{\mathbf{i}(j);k}(\ell), f_{\mathbf{i}(j);k}(\ell),  \]
where $i,j,k,s, \mathbf{i}(j), \ell$ range through a finite set of indices and belong to $F_{M, \k}^\k$ for $M >>0$, as well two finite collections of elements of $\k$, $\lambda_{j,k}(a_1,\ldots,a_l), \mu_{j,k}(a_1,\ldots,a_l)$, linked be algebraic equations
\[  (1) \,  (2) \,  (3) \,  (4) \, (5) \, (6) \,  (7) \, (8) \, (9). \]
This data can be parametrized by a locally closed subset defined by the simultaneous vanishing of polynomials $\{ f_1, \ldots, f_r \}$ and non-vanishing of some $g \in \{ g_1, \ldots, g_s \}$ in $\k^N$, a sufficiently big affine space; and all these polynomials have integer coefficients. Call this variety $X^\k$. Then the Gelfand-Kirillov Conjecture holding for $U(\g_{\k,\Sigma})$ implies $X^\k \neq \varnothing$. This process is reversible --- hence we have an equivalence of $X^\k \neq \varnothing$ and the Gelfand-Kirillov Conjecture for $U(\g_{\k, \Sigma})$. 
\end{proof}
\end{theorem}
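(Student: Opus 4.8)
The plan is to make precise the claim that ``this process is reversible'' and that the resulting set $X^\k$ is a locally closed subvariety of $\k^N$ cut out by polynomials with integer coefficients. Since the forward direction is carried out in the body of the proof, what remains is to organize the construction so that (a) the number of auxiliary elements $a_i,b_i,a_{i,j},b_{i,j},c_{i,j},d_{i,j},\dots$ is \emph{bounded a priori} by data depending only on the $\Z$-compatible family $\mathcal K$ and the pair $(m,l)$ that a putative isomorphism $\Frac\,A_\k\cong\D_{m,l}(\k)$ would force, and (b) each relation $(1)$--$(9)$, once the unknowns are expanded in the fixed $\k$-basis of the relevant filtration piece $A_M^\Z$, becomes a finite system of \emph{polynomial} identities in the structure constants with coefficients in $\Z$.

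First I would pin down $(m,l)$: by condition (5) of Definition \ref{main-def}, $Tdeg\,\Frac\,A_\k=\dim_\k A_1^\k=:d$, which is an integer determined by $A_\Z$ alone; and by condition (4) together with condition (6), $l=tdeg\,\Frac\,Z(A_\k)=s$ is also determined by $A_\Z$. Hence $2m+l=d$ forces $m=(d-s)/2$, a fixed integer (if $d-s$ is odd or negative, $X^\k$ is declared empty and there is nothing to prove). This removes the apparent unboundedness: the indices $i,j$ range over $\{1,\dots,2m\}$, the multi-indices $\mathfrak i$ over tuples with $\sum_\ell i(\ell)\le M$ where $M$ is the fixed bound $\max\{\deg p_i,\deg q_i\}$ allowed by Proposition \ref{charac} (and $M$ may itself be taken as large as needed, but fixed), and all the auxiliary elements live in $A_M^\k=A_M^\Z\otimes_\Z\k$, a free $\k$-module of fixed finite rank with a distinguished $\Z$-basis $\{u_1,\dots,u_T\}$ coming from condition (3). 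Write each unknown element as $\sum_\nu \xi_\nu u_\nu$ with $\xi_\nu\in\k$; the multiplication table of $A_M^\Z$ has integer structure constants; so every displayed relation $(1)$--$(9)$, after collecting coefficients of each basis vector $u_\nu$, becomes a finite list of polynomial equations over $\Z$ in the coordinates $\xi_\nu$ and in the scalars $\lambda_{j,k}(n_1,\dots,n_l),\mu_{j,k}(n_1,\dots,n_l)$. The inequations are the genuineness conditions $b_i\ne 0$, $f_{\mathfrak i(j);k}\ne 0$, etc.\ (finitely many), each expressible as the non-vanishing of at least one coordinate; bundling them by a single ``not all coordinates zero'' clause, or by working on the finitely many affine charts where a given coordinate is invertible, realizes $X^\k$ as a locally closed subvariety of an affine space $\k^N$, $N$ the total number of scalar unknowns, defined over $\Z$.

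Then I would argue reversibility. Given a point of $X^\k$, relations $(1)$--$(6)$ let us \emph{define} $w_i:=b_i^{-1}a_i\in\Frac\,A_\k$ and verify, using exactly the chain of identities $(2)$--$(9)$ (each of which is an equality in $A_\k\subseteq\Frac\,A_\k$ obtained by clearing denominators), that the $w_i$ satisfy the Weyl relations $(\dagger)$ and that $q_ix_i=p_i$ holds in $\Frac\,A_\k$ with $p_i,q_i$ the indicated polynomials in the $w$'s over $Z(A_\k)=\k[\phi_1,\dots,\phi_l]$. Since $x_1,\dots,x_n$ generate $A_\k$ as a $\k$-algebra (they are a basis of $A_1^\k$ and $A_i^\k=(A_1^\k)^i$), the equalities $q_ix_i=p_i$ force $x_i\in\Frac\,B$ where $B$ is the subalgebra of $\Frac\,A_\k$ generated by the $w$'s together with $\phi_1,\dots,\phi_l$; adjoining the $\phi$'s is harmless since they are central and, being in $Z(A_\k)$, already lie in $\Frac\,A_\k$, so $\Frac\,B=\Frac\,A_\k$. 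Now Proposition \ref{bois} (applied with the $2m$ Weyl generators and the $l$ central generators $z_i=\phi_i$, using $Tdeg\,A_\k=2m+l$ from condition (5)) yields $\Frac\,A_\k\cong\D_{m,l}(\k)$, i.e.\ the Gelfand--Kirillov Conjecture for $A_\k$. Thus $X^\k\ne\varnothing$ is equivalent to the Conjecture for $A_\k$, with $X^\k$ defined over $\Z$ independently of $\k$.

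The main obstacle is bookkeeping rather than mathematics: one must check that each of the Ore-condition invocations producing $a_i,b_i,c_{i,j},\dots$ can be performed \emph{inside a single filtration level $A_M^\k$ with $M$ depending only on $\mathcal K$, $m$, $l$} and not on the (unknown) isomorphism — this is what makes $N$ and the defining polynomials uniform in $\k$. This is handled exactly as in \cite[Theorem 2.1]{Premet}: the Ore condition in the filtered domain $A_\k$ produces common left multiples within a bounded degree increase, and all products appearing in $(1)$--$(9)$ have bounded total degree since $M=\max\{\deg p_i,\deg q_i\}$ and the multi-indices $\mathfrak i$ are constrained by $\sum_\ell i(\ell)\le M$; one then simply takes $M$ large enough to accommodate the finitely many degree increases, which is legitimate because there are only finitely many such operations. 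Once this uniform bound is in place, everything else is the routine translation of module identities over $A_M^\Z$ into polynomial identities over $\Z$ described above.
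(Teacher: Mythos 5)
Your proposal is correct and follows essentially the same route as the paper's own proof (itself modelled on Premet's Theorem 2.1): encode the Ore-fraction data and the relations $(1)$--$(9)$ as integer-coefficient polynomial conditions on coordinates with respect to the $\Z$-bases of the finite free modules $A_M^\Z$, and recover the conjecture from a point of $X^\k$ via Propositions \ref{charac} and \ref{bois}. Your additional observations --- that $m$ and $l$ are pinned down a priori by conditions (4)--(6) of Definition \ref{main-def}, and that the bound $M$ is fixed by the chosen witness rather than needing to be uniform over $\mathcal{K}$ --- are consistent with, and usefully more explicit than, what the paper leaves implicit.
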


\begin{theorem}
 In the notation of the previous theorem, let $\psi_{\k, \Sigma}:=(\bigwedge f_i = 0)\wedge (\bigvee g_j \neq 0)$, and denote by $\phi_{\k, \Sigma}$ its existential closure. We have that the Gelfand-Kirillov Conjecture holds if and only if $\k \models \phi_{\k,\Sigma}$.
\end{theorem}
\begin{proof}
    $X^\k \neq \varnothing$ if and only if there exists an element $\lambda \in \k$ with $(\bigwedge f_i(\lambda) = 0)\wedge (\bigvee g_j(\lambda) \neq 0)$. This is clearly equivalent to $\k \models \phi_{\k, \Sigma}$
\end{proof}

We shall see that $\phi_{\A, \Sigma}$ is the formula $\phi_\Sigma$ for Theorem \ref{main1}

\begin{corol} \label{mainpart2}
If the Gelfand-Kirillov Conjecture holds for one $U(\g_{\Z, \Sigma}) \in \mathcal{K}$, $\k$ algebraically closed of zero characteristic, then it holds for $U(\g_{\A, \Sigma})$.
\end{corol}
\begin{proof}
By the above Theorem (the notation which we use here), $X^\k \neq \varnothing$. Hence $X^\k(\A) \neq \varnothing$, because $X$ is defined by polynomials with integer coefficients. The process of Theorem \ref{mainpart1} is reversible, and relation (9) therein refers to the ground field; hence, we can assume that elements in $(\dagger), \, (\ddagger)$ belongs to $\operatorname{Frac} \, U(\g_{\A, \Sigma})$. Since the elements $x_1, \ldots, x_n$ also belong to $\mathfrak{g}_{\Z, \Sigma}$ and, by definition, are a basis of this vector space, then we can  apply Proposition $\ref{charac}$ to conclude that Gelfand-Kirillov Conjecture holds for  $U(\g_{\A, \Sigma})$.
\end{proof}

\begin{proposition} \label{mainpart3}
If the Gelfand-Kirillov Conjecture holds for $U(\g_{\A, \Sigma})$, then it holds for all $U(\g_{\k, \Sigma})$, $\k$ any algebraically closed field of zero characteristic.
\begin{proof}
If the Gelfand-Kirillov Conjecture holds for $U(\g_{\A, \Sigma})$ and we can find elements $x's \in U(\g_{\A, \Sigma})$ and $w's \in \operatorname{Frac} \, U(\g_{\A, \Sigma})$ as in Proposition \ref{charac}, then these elements lie, respectively, also in $U(\g_{\k, \Sigma})$ and $\operatorname{Frac} \, U(\g_{\k, \Sigma})$. Then by applying Proposition \ref{charac} in the other direction, we conclude that Gelfand-Kirillov Conjecture also holds for $U(\g_{\k, \Sigma})$.
\end{proof}
\end{proposition}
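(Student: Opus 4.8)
The plan is to run the argument of Proposition \ref{charac} in reverse, using Lemma \ref{obvious} to transport the relevant witnesses from $A_\A$ up to $A_\k$. First I would unpack what the hypothesis gives us: since the Gelfand-Kirillov Conjecture holds for $A_\A$, we have $Frac \, A_\A \cong \D_{m,l}(\A)$, and by Proposition \ref{charac} (applied in the ``only if'' direction over the field $\A$) there exist a basis $x_1, \dots, x_n \in A_1^\A$, elements $w_1, \dots, w_{2m} \in Frac \, A_\A$ satisfying the Weyl relations $[w_i,w_j]=[w_{m+i},w_{m+j}]=0$, $[w_i, w_{m+j}]=\delta_{ij}$, and polynomials $p_i, q_i$ in the $w$'s with coefficients in $Z(A_\A)$ such that $q_i x_i = p_i$ for $i = 1, \dots, n$. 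One should check that the $x_i$ can be taken in $A_1^\Z$: by condition (3) of Definition \ref{main-def}, $A_1^\Z$ is a finitely generated free $\Z$-module whose $\Z$-rank equals $\dim_\A A_1^\A$, so any $\Z$-basis of $A_1^\Z$ is simultaneously an $\A$-basis of $A_1^\A$, and we fix such a basis.

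Next I would push these data into $A_\k$. By Lemma \ref{obvious} we have inclusions $A_\Z \subseteq A_\A \subseteq A_\k$ (and $A_i^\Z \subseteq A_i^\A \subseteq A_i^\k$ for each $i$), and these are compatible with the embeddings into skew fields of fractions, so $Frac \, A_\A \subseteq Frac \, A_\k$. Hence the elements $w_1, \dots, w_{2m}$ lie in $Frac \, A_\k$ and continue to satisfy the same commutation relations there, since commutators are computed identically in any overring. The chosen $\Z$-basis $x_1, \dots, x_n$ of $A_1^\Z$ is also an $\Z$-basis of $A_1^\Z$ inside $A_1^\k = A_1^\Z \otimes_\Z \k$, hence a $\k$-basis of $A_1^\k$ by the tensor construction in condition (3). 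For the coefficients of $p_i, q_i$: by condition (6) of Definition \ref{main-def}, $Z(A_\A) = \A[\phi_1, \dots, \phi_s]$ with $\phi_j \in A_\Z$, and these same $\phi_j$ are central in $A_\k$ (again by condition (6), or simply because centrality of an element of $A_\Z$ is preserved under base change $-\otimes_\Z \k$), so $p_i, q_i$ can be regarded as polynomials in the $w$'s with coefficients in $Z(A_\k)$, and the identities $q_i x_i = p_i$ persist in $Frac \, A_\k$.

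Finally, with a $\k$-basis $x_1, \dots, x_n$ of $A_1^\k$, elements $w_1, \dots, w_{2m} \in Frac \, A_\k$ satisfying the Weyl relations, and polynomials $p_i, q_i$ over $Z(A_\k)$ with $q_i x_i = p_i$, Proposition \ref{charac} (applied in the ``if'' direction over $\k$) yields $Frac \, A_\k \cong \D_{m,l}(\k)$. It remains only to note that $l$ is the correct Planck-type constant, i.e.\ $l = tdeg \, Frac \, Z(A_\k)$ and $Tdeg \, Frac \, A_\k = 2m + l$, which is automatic since Proposition \ref{charac} produces an isomorphism with $\D_{m,l}(\k)$ and the invariants $Tdeg$ and $tdeg \, Frac \, Z$ are read off from that isomorphism exactly as in the definition preceding it; thus the Gelfand-Kirillov Conjecture holds for $A_\k$. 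I do not expect a serious obstacle here: the only point requiring care is the bookkeeping that the \emph{same} integral elements $x_i$ and $\phi_j$ serve as bases/central generators simultaneously over $\A$ and over $\k$, which is exactly what Lemma \ref{obvious} together with condition (3) is designed to guarantee; everything else is formal transport along the inclusion $Frac \, A_\A \subseteq Frac \, A_\k$.
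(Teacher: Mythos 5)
Your proposal is correct and follows essentially the same route as the paper's own proof: transport the witnesses $x_i$, $w_j$, $p_i$, $q_i$ from $A_\A$ and $Frac\, A_\A$ into $A_\k$ and $Frac\, A_\k$ via the inclusions of Lemma \ref{obvious}, then apply Proposition \ref{charac} in the reverse direction. The paper states this in two sentences; you simply spell out the bookkeeping (integrality of the basis and of the central generators) that the paper leaves implicit.
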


\begin{theorem}\label{mainpart4}
 The Gelfand-Kirillov Conjecture holds for $U(\g_{\A, \Sigma})$ if and only if $ACF_0 \vdash \phi_\Sigma$, $\phi_\Sigma$ a formula in the language of rings $\mathcal{R}$; and $\phi_\Sigma$ is the existential closure of a boolean combination of atomic formulas.
\begin{proof}
As in the proof Theorem \ref{mainpart1}, there is a locally closed subvariety $X^\A$ of a certain $\A^N$ such that Gelfand-Kirillov Conjecture holds if and only if $X^\A \neq \varnothing$. Recall how $X^\A$ is defined: the simultaneous vanishing of polynomials $\{ f_1, \ldots, f_r \}$ and non-vanishing of some $g \in \{ g_1, \ldots, g_s \}$ in $\A^N$. Let $\psi_\Sigma=(\bigwedge f_i = 0)\wedge (\bigvee g_j \neq 0)$, and denote by $\phi$ the existential closure of $\psi_\Sigma$. Clearly $X^\A \neq \varnothing$ if and only if $\A \vDash \phi$. Since the theory of algebraically closed fields of a fixed characteristic is complete (\cite{Hodges}), this is the case if and only if $ACF_0 \vdash \phi$.
\end{proof}
\end{theorem}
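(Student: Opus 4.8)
The plan is to reduce the statement about the Gelfand–Kirillov Conjecture for $A_\A$ to a pure statement about a definable set over $\A$, and then invoke the completeness of $ACF_0$. The heavy lifting has already been done in Theorem \ref{mainpart1}, whose construction applies verbatim with $\k$ replaced by $\A$ (indeed, $\A$ is algebraically closed of characteristic zero, so it is one of the fields to which that theorem applies). First I would recall the output of that construction: a locally closed subvariety $X^\A \subseteq \A^N$, cut out by the simultaneous vanishing of a finite list of integer-coefficient polynomials $f_1, \ldots, f_r$ together with the non-vanishing of at least one of finitely many integer-coefficient polynomials $g_1, \ldots, g_s$, such that the Gelfand–Kirillov Conjecture holds for $A_\A$ if and only if $X^\A \neq \varnothing$.

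Next I would write down the first-order sentence explicitly. Since each $f_i$ and each $g_j$ is a polynomial with integer coefficients in variables $T_1, \ldots, T_N$, each equation $f_i = 0$ and each inequation $g_j \neq 0$ is an atomic (or negated atomic) formula in the language $\mathcal{L} = (0,1,+,*)$ after expanding the integer coefficients as sums of $1$'s; let $\psi_\mathcal{K}(T_1,\ldots,T_N)$ be the quantifier-free formula $\bigl( \bigwedge_{i=1}^r f_i = 0 \bigr) \wedge \bigl( \bigvee_{j=1}^s g_j \neq 0 \bigr)$, a boolean combination of atomic formulas, and let $\phi_\mathcal{K} := \exists T_1 \cdots \exists T_N \, \psi_\mathcal{K}$ be its existential closure. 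Then by construction $X^\A \neq \varnothing$ if and only if $\A \vDash \phi_\mathcal{K}$, so the Gelfand–Kirillov Conjecture holds for $A_\A$ if and only if $\A \vDash \phi_\mathcal{K}$.

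Finally I would close the loop with model theory: $\A$, being an algebraically closed field of characteristic zero, is a model of $ACF_0$, and $ACF_0$ is a complete theory (the classical Tarski–Steinitz result, cf. \cite{Hodges}), so a sentence is satisfied in some model if and only if it is provable from the theory. Hence $\A \vDash \phi_\mathcal{K}$ if and only if $ACF_0 \vdash \phi_\mathcal{K}$, and chaining the equivalences gives the claim. I should also note that $\phi_\mathcal{K}$ is by construction the existential closure of a boolean combination of atomic formulas, which is exactly the shape asserted.

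I do not anticipate a serious obstacle here, since this theorem is essentially a bookkeeping corollary of Theorem \ref{mainpart1} plus completeness of $ACF_0$. The one point that deserves care is making sure the variety $X^\A$ really is defined over $\Z$ (equivalently, that all the polynomials involved have integer coefficients and none of the structure constants that enter the construction are field-dependent); this is guaranteed because the family $\mathcal{K}$ is $\Z$-compatible, so the filtration pieces $A_i^\Z$, the central elements $\phi_i$, and the multiplication on $A_\Z$ are all defined over $\Z$, and the Ore-condition witnesses and the polynomial relations $(1)$–$(9)$ are algebraic identities with integer coefficients. A secondary subtlety is that the language as written in the statement is $(0,1,+,-)$ rather than $(0,1,+,*)$; I would simply treat this as a typo for the ring language $(0,1,+,*)$ used throughout, since multiplication is plainly needed to express polynomial vanishing.
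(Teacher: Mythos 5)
Your proposal is correct and follows essentially the same route as the paper: apply Theorem \ref{mainpart1} with $\k=\A$ to obtain the integer-defined locally closed set $X^\A$, encode its nonemptiness as the existential closure of the boolean combination $\bigl(\bigwedge f_i=0\bigr)\wedge\bigl(\bigvee g_j\neq 0\bigr)$, and conclude by completeness of $ACF_0$. Your two added remarks (that $\Z$-compatibility is what guarantees the defining polynomials have integer coefficients, and that the signature $(0,1,+,-)$ is a typo for $(0,1,+,*)$) are both accurate and consistent with the paper.
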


    We are now ready to prove Theorem \ref{main1}
\begin{proof}
(1) implies (2) by Corollary \ref{mainpart2} and Proposition \ref{mainpart3}. (2) implies (1) trivially. Let $\phi_\Sigma$ be as in Theorem \ref{mainpart4}. Then Proposition \ref{mainpart3} and Theorem \ref{mainpart4} show that (2) and (3) are equivalent.
\end{proof}

\section*{Appendix}

We will use results from a previous version of this preprint \cite{MS}. In that version, we defined a notion of $\mathbb{Z}$-compatible family of algebras, which are defined over $\Z$, and satisfy many hypothesis. In that setting, we proved a very general and abstract version of Theorem \ref{main1} (\cite[Theorem 2.2]{MS}). Under closer scrutiny, it seems that many of the imposed conditions for such family of algebras defined over $\Z$ are not necessary. However, the results of that version continue to hold, and we repeat them in this appendix, with some new results also.

The first result we want to recall is  the following (we refer to \cite{SchwarzPan} for a detailed discussion of all notions involved).

\begin{theorem}
    Let $G$ be a finite group of linear automorphisms of the polynomial algebra in $n$ variables, extended naturally to a group of automorphisms of the Weyl algebra $A_n(\k)$, where $\k$ is an algebraically closed field of zero characteristic. Then there exists a formula $\phi_G$, which is a boolean combination of atomic formulas, such that the following are equivalent:

    \begin{enumerate}
        \item Noncommutative Noether's problem holds for $A_n(\k)^G$ some some algebraically closed field of zero characteristic $\k$
        \item Noncommutative Noether's problem holdes for $A_n(\k)$ for \emph{all} algebraically closed fields of zero characteristic.
        \item
        $AFC_0 \vdash \phi_G$
    \end{enumerate}
\end{theorem}

\begin{corollary}
    Let $\k$ be any field of characteristic $0$, not necessarily algebraically closed. Let $\bar{\k}$ denotes its algebraic closure. Let $G$ be a finite group of linear automorphims of $A_n(\k)$ such that the noncommutative Noether's problem has a negative solution for $A_n(\bar{\k})^G$, then it has a negative solution for $A_n(\k)^G$.
\end{corollary}
\begin{proof}
    By the previous theorem, the \emph{negation} of noncommutative Noether's problem for $A_n(\bar{\k})^G$ is equivalent to the validity of a $\prod_1$-formula, and those are preserved by substructures.
\end{proof}

By the same idea, using the fact that any field has an algebraic closure, we can slightly improve Premet's result about the Gelfand-Kirillov Conjecture \cite{Premet}

\begin{corollary}
    If $\Sigma$ is an irreducible root system of type $B,D,E,F$ and $\k$ is any splitting field (of zero characteristic) given a choice of Cartan subalgebra, then the Gelfand-Kirillov Conjecture is false.
\end{corollary}

We notice that the Weyl algebra definition by generators and relations makes perfect sense for a field of prime characteristic $\mathbb{F}$, and $A_n(\mathbb{F})$ will be again a Noetherian domain, with skew field of fractions $\mathbb{D}_n(\mathbb{F})$. The difference is that now $A_n(\mathbb{F})$ is not simple, but an Azumaya algebra over its center, and $\mathbb{D}_n(\mathbb{F})$ is a finite dimensional central simple algebra by Posner's Theorem  \cite[Chapter 13]{McConnell} \cite{BMR} \cite{Bois} \cite{Revoy}.

The Weyl algebra in prime characteristic is different from the usual ring of differential operators on the affine space, due to Grothendieck (see, e.g., \cite{Smith}). However, we can recover it if we consider rings of \emph{crystalline differential operators}, introduced in \cite{BMR}, and whose definition we recall. The difference from this ring and the full ring of differential operators in Grothendieck's sense is the removal of divided powers.

\begin{definition}
Let $\mathbb{F}$ be an algebraically closed field of prime characteristic, $X$ an smooth affine variety. $\mathcal{D}_c(X)$, the ring of crystalline differential operators on $X$, is generated by $\mathcal{O}(X)$ and $Der_\F \mathcal{O}(X)$ subject to the relations

\[ f.\partial=f\partial, \, \partial.f - f.\partial=\partial(f), \]

\[ \partial.\partial'-\partial'.\partial=[\partial,\partial'], f \in \mathcal{O}(X), \partial,\partial' \in Der_\F \mathcal{O}(X).\]

\end{definition}

With this definition, $A_n(\F) \simeq \mathcal{D}_c(\F^n)$. The noncommutative Noether's problem in prime characteristic was studied in \cite{SchwarzPan}, and the Gelfand-Kirillov Conjecture in prime characteristic was studied in \cite{Bois} and \cite{Premet}. In both cases, the statement of the problem is the same is in the 0 characteristic case. Using \cite[Theorem 2.14]{MS}, we have a new proof of \cite[Theorem 2.1]{Premet} and a new result:

\begin{theorem}
    \begin{enumerate}
    \item
        If the Gelfand-Kirillov Conjecture holds for a semisimple Lie algebra over any algebraically closed field $\k$ with zero characteristic with root system $\Sigma$, then its modular version holds for all algebraically closed fields $\mathbb{F}$ with characteristic big enough.

        \item Let $G$ be a finite group of permutations acting naturally on $A_n(\k)$, $\k$ an arbitrary algebraically closed field of zero characteristic. If the noncommutative Noether's problem has a positive solution, then so has its modular version for the same action of $G$, for any algebraically closed field $\F$ of characteristic big enough.
    \end{enumerate}
\end{theorem}

Finally, we move to our last result. In \cite{Tikaradze}, it was shown that if $G$ is any permutation group acting naturally by algebra automorphisms of $A_n(\C)$, then if noncommutative Noether's problem has a positive solution, $\F(x_1, \ldots, x_n)^G$ is stably-rational for all algebraically closed fields $\F$ with prime characteristic big enough. This is a partial converse of the main result of \cite{FS}, that shows that a positive solution to Noether's problem implies a solution to its noncommutative analogue.

We prove a stronger statement, that, in particular, implies the result in \cite{Tikaradze}.

Let $A$ be a commutative unital affine $\Z$ algebra, integral domain, such that the map $\operatorname{Spec} \, A \rightarrow \operatorname{Spec} \Z$ is ètale. For any algebraically closed field $\k$, consider the base change $A_\k: A \otimes_\Z \k$. Since being ètale is stable under base change, $A_\k$ is the algebra of regular functions of an smooth irreducible variety $X_\k$. We have $\mathcal{D}(X_\k)= \Delta(A) \otimes_\Z \k$, where $\Delta(A)$ is the derivation ring of $A$ (\cite[15.1]{McConnell}), and $\mathcal{D}(\cdot)$ is the usual ring of differential operators, if $char \, \k=0$; if $char \, \k=p>0$, then $\Delta(A) \otimes_\Z \k=\mathcal{D}_c(X_\k)$. Our last result, that also can be obtained using the theory of $\Z$-compatible family of algebras, is as follows:

\begin{theorem}
    In the situation as above, suppose $\operatorname{Frac} \mathcal{D}(X_k) \simeq \mathbb{D}_n(\k)$ for some algebraically closed field of zero characteristic. Then $\operatorname{Frac} \mathcal{D}_c(X_\k) \simeq \mathbb{D}_n(\k)$ for all algebraically closed fields $\k$ of prime characteristic big enough.
\end{theorem}

\end{document}